\numberwithin{equation}{section}
\newcommand{\wt}[1]{\widetilde{#1}}
\newtheorem{theorem}{Theorem}[section]
\newtheorem*{theorem*}{Theorem}
\newtheorem{lemma}[theorem]{Lemma}
\newtheorem{proposition}[theorem]{Proposition}
\newtheorem*{conjecture*}{Conjecture}
\newtheorem{remark}[theorem]{Remark}
\newcommand{\opname}[1]{\operatorname{\mathsf{#1}}}
\renewcommand{\mod}{\opname{mod}\nolimits}
\newcommand{\add}{\opname{add}\nolimits}
\newcommand{\der}{\cd}
\newcommand{\ind}{\opname{ind}}
\newcommand{\rad}{\opname{rad}\nolimits}
\newcommand{\Z}{\mathbb{Z}}
\newcommand{\Hom}{\opname{Hom}}
\newcommand{\Ext}{\opname{Ext}}
\newcommand{\End}{\opname{End}}
\newcommand{\cc}{{\mathcal C}}
\newcommand{\cd}{{\mathcal D}}
\newcommand{\ct}{{\mathcal T}}
\renewcommand{\hat}[1]{\widehat{#1}}
\begin{document}

\title[Hall polynomials]{Hall polynomials for representation-finite cluster-tilted algebras}
\date{\today}
\author{Changjian Fu}
\address{Department of Mathematics\\
SiChuan University\\
610064 Chengdu\\
P.R.China
}
\email{
\begin{minipage}[t]{5cm}
changjianfu@scu.edu.cn
\end{minipage}}
\subjclass[2010]{16D90, 16G60}
\keywords{Hall polynomial,
representation-finite, cluster-tilted algebra}
\thanks{Partially supported by NSF of China(No.11001185).}
\begin{abstract}
We show the existence of Hall polynomials for representation-finite cluster-tilted algebras.
\end{abstract}
\maketitle

\section{Introduction}

\subsection{} Let $k$ be a finite field and $\Lambda$ a locally bounded $k$-algebra, that is, $\Lambda$ is an associative algebra and $\Lambda$ has a set of primitive orthogonal idempotents $\{e_i\}_I$ such that $\Lambda=\oplus_{i,j\in I}e_i\Lambda e_j$, and both $\dim_k\Lambda e_i$ and $\dim_ke_i\Lambda$ are finite for all $i\in I$. Let $\mod \Lambda$ be the category of right $\Lambda$-modules with finite length. For $L, M,N\in \mod \Lambda$, we denote by $F^M_{N,L}$ the number of submodules $U$ of $M$ such that $U\cong L$ and $M/U\cong N$.

Let $E$ be a field extension of $k$. For any $k$-space $V$, we denote by $V^E$ the $E$-space $V\otimes_k E$. Clearly, $\Lambda^E$ naturally becomes a $E$-algebra. The field $E$ is called {\it conservative}~\cite{Ringel93}  for $\Lambda$ if for any indecomposable $M\in \mod \Lambda$, $(\End M/\rad \End M)^E$ is a field.  Set
\[\Omega=\{E| E~ \text{is a finite field extension of $k$ which is conservative for $\Lambda$}\}.
\]
For a given $\Lambda$ with $\Omega$ infinite,  the algebra $\Lambda$ {\it has Hall polynomials} provided that for any $L,M,N\in \mod \Lambda$, there exists a polynomial $\phi^M_{N,L}\in \Z[T]$ such that for any conservative finite field extension $E$ of $k$ for $\Lambda$,
\[\phi^M_{N,L}(|E|)=F^{M^E}_{N^E,L^E}.
\]
We call $\phi^M_{N,L}$ the {\it Hall polynomial} associated to $L,M,N\in \mod \Lambda$. Note that if $\Lambda$ is representation-finite, then $\Omega$ is an infinite set.

It has been conjectured by Ringel~\cite{Ringel93} that any representation-finite algebra has Hall polynomials. This conjecture has been verified for representation-directed algebras by Ringel~\cite{Ringel93}, cyclic serial algebras by Guo~\cite{Guo95} and Ringel~\cite{Ringel93b} and some other classes of algebras({\it cf. eg}. ~\cite{GuoPeng97}).
\subsection{}
Let $A$ be a finite-dimensional hereditary algebra over a field $k$. Let $\mod A$ be the finitely generated right $A$-modules and $\der^b(\mod A)$ the bounded derived category with suspension functor $\Sigma$. The cluster category $\cc(A)$ associated with $A$ was introduced in ~\cite{BuanMarshReinekeReitenTodorov}(independently in ~\cite{CalderoChaptonSchiffler} for $A_n$ case) as the orbit category $\der^b(\mod A)/\tau^{-1}\circ \Sigma$, where $\tau$ is the Auslander-Reiten translation of $\der^b(\mod A)$. A cluster-tilting object $T$ in $\cc(A)$ is an object such that $\Ext^1_{\cc(A)}(T,T)=0$ and it is maximal with this property. The endomorphism algebra $\End_{\cc(A)}(T)$ of a cluster-tilting object $T$ is called the cluster-tilted algebra of $T$, which were first introduced by Buan, Marsh and Reiten in ~\cite{BuanMarshReiten07}. Among others, they showed that cluster-tilted algebras are Gorenstein of dimension $1$. This has been further generalized to a more general setting by Keller-Reiten~\cite{KellerReiten07} and K\"{o}nig-Zhu~\cite{KonigZhu08}. Moreover, Keller and Reiten~\cite{KellerReiten07} proved  that the stable Cohen-Macaulay category of a given cluster-tilted algebra(more generally, $2$-Calabi-Yau tilted algebra) is $3$-Calabi-Yau. A direct consequence of the stably Calabi-Yau property is that the Auslander-Reiten conjecture holds true for cluster-tilted algebras. Namely, let $B$ be a cluster-tilted algebra over a field $k$, if $M$ is a finitely generated right $B$-module such that  $\Ext^i_{\mod B}(M, M\oplus B)=0$ for all $i\geq 1$, then $M$ is a projective $B$-module.

In ~\cite{Zhu2010}, Zhu has introduced certain Galois coverings for cluster categories and cluster-tilted algebras({\it cf.} also ~\cite{AssemBrustleSchiffler09}), called repetitive cluster categories and repetitive cluster-tilted algebras respectively( for the precisely definition, {\it cf.} Section~\ref{s:repetitive cluster categories}). We refer to ~\cite{BongartzGabriel82} for the notions of (Galois) covering functors.
The aim of this note is to show that Ringel's conjecture holds true for representation-finite repetitive cluster-tilted algebras. In particular, representation-finite cluster-tilted algebras have Hall polynomials.
\begin{theorem}~\label{t: main theorem}
Let $\Lambda$ be a representation-finite repetitive cluster-tilted algebra over a finite field $k$. Then $\Lambda$ has Hall polynomials.
\end{theorem}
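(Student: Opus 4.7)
The plan is to reduce Theorem~\ref{t: main theorem} to Ringel's theorem~\cite{Ringel93} asserting that every representation-directed algebra admits Hall polynomials. It therefore suffices to prove that any representation-finite repetitive cluster-tilted algebra $\Lambda$ is representation-directed, \ie $\mod \Lambda$ contains no cycle $M_0 \to M_1 \to \cdots \to M_r = M_0$ of non-zero non-isomorphisms between indecomposables.

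First, I would use the construction of~\cite{Zhu2010} recalled in Section~\ref{s:repetitive cluster categories}: $\Lambda$ is the endomorphism ring of a cluster-tilting object $\wt T$ in a repetitive cluster category $\wt{\cc}$, realised as a Galois cover of an ordinary cluster category $\cc(A)$ for some finite-dimensional hereditary algebra $A$ via a free action of some group $G$. This exhibits $\Lambda$ as a Galois cover of the cluster-tilted algebra $B = \End_{\cc(A)}(T)$, and the general theory~\cite{BongartzGabriel82} then furnishes an exact push-down functor $\pi_* : \mod \Lambda \to \mod B$ that preserves indecomposability of non-stabilised objects, sends Auslander-Reiten sequences to Auslander-Reiten sequences, and intertwines the Auslander-Reiten translates of $\Lambda$ and $B$.

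Second, I would analyse the Auslander-Reiten quiver of $\mod \Lambda$ by lifting through the cover to $\der^b(\mod A)$. The repetitive cluster category $\wt{\cc}$ admits a triangulated covering from a suitable full subcategory of $\der^b(\mod A)$, whose Auslander-Reiten quiver is the translation quiver $\Z Q$ associated to the underlying graph $Q$ of $A$; such a translation quiver is a tree and is therefore acyclic. Pulling back along this lifted cover, the AR-quiver of $\mod \Lambda$ embeds as a full sub-translation-quiver of the derived AR-quiver, hence is itself acyclic. This forces $\Lambda$ to be representation-directed, and an appeal to Ringel's theorem produces the desired Hall polynomials.

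The main obstacle will be the second step: precisely matching the AR-combinatorics of $\mod \Lambda$ with a sub-translation-quiver of the derived AR-quiver. This requires a careful identification of which indecomposables of $\wt{\cc}$ actually represent $\Lambda$-modules (roughly, those not lying in $\add(\wt T[1])$) together with a verification that the free $G$-action genuinely unfolds every cycle present in the AR-quiver of $\cc(A)$. Once this unfolding is established, representation-directedness, and hence the existence of Hall polynomials, follows; the ``in particular'' statement for ordinary cluster-tilted algebras is then obtained either by viewing such algebras as the trivial case of the repetitive construction, or by descending Hall polynomials along the covering $\Lambda \to B$ via Ringel's orbit formula.
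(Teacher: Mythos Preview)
Your central claim --- that a representation-finite repetitive cluster-tilted algebra $\Lambda$ is itself representation-directed --- is false, and this breaks the whole strategy. Already for $m=1$ (where the repetitive cluster-tilted algebra \emph{is} the ordinary cluster-tilted algebra, so there is nothing to ``descend''), take $A$ of type $A_3$ and a cluster-tilting object whose endomorphism ring is the cyclic quiver $1\to 2\to 3\to 1$ with all length-two paths zero. This $\Lambda$ is self-injective Nakayama; its Auslander--Reiten quiver is a tube, so $\mod\Lambda$ contains cycles of non-isomorphisms between indecomposables. Hence Ringel's theorem for representation-directed algebras does not apply.

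The mistake is in the direction of the covering. The functor $\pi_m:\der^b(\mod A)\to \cc_{F^m}(A)$ realises $\cc_{F^m}(A)$ as an \emph{orbit} category, so the AR-quiver of $\mod\Lambda\cong\cc_{F^m}(A)/\add\Sigma\wt T$ is a \emph{quotient} of (a cofinite piece of) $\Z Q$, not a full subquiver of it. Quotienting an acyclic translation quiver by a free $\Z$-action typically creates cycles; it does not ``unfold'' them. What \emph{is} directed is the cover $\End(\ct)$, with $\ct=\pi_A^{-1}(\add T)\subset\der^b(\mod A)$, and this is exactly what the paper exploits: it invokes the Guo--Peng criterion, which only asks for a directed locally bounded algebra $R$ together with a covering $\mod R\to\mod\Lambda$ that lifts $\Ext^1$ correctly. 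The substantive step in the paper --- absent from your outline --- is to show that for indecomposable $M,N\in\mod\Lambda$ there is a \emph{single} index $t$ with $\Ext^1_{\End(\ct)}(\hat M_t,\hat N)\cong\Ext^1_\Lambda(M,N)$, which is established by a case analysis in $\der^b(\mod A)$ using the Auslander--Reiten formula and the directedness of the derived category. Without an argument of this type, the reduction to a directed situation does not go through.
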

Let us mention here that a variant proof of this theorem may be applied to  generalized cluster-tilted algebras of higher cluster categories of type ADE. In order to make this note concise, we restrict ourselves to the case of cluster-tilted algebras.
After recall some basic definitions and properties of repetitive cluster-tilted algebras in Section~\ref{s:repetitive cluster categories} , we will give the proof of Theorem~\ref{t: main theorem} in Section~\ref{s: proof main theorem}.

\section{Repetitive cluster categories and repetitive cluster-tilted algebras}~\label{s:repetitive cluster categories}
\subsection{}Let $\der$ be a $k$-linear triangulated category with suspension functor $\Sigma$ and $\ct$ a functorially finite subcategory of $\der$. The subcategory $\ct$ is called a {\it cluster-tilting subcategory}, if the followings are equivalent:
\begin{itemize}
\item[$\circ$] $X\in \ct$;
\item[$\circ$] $\Ext^1_{\der}(X, \ct)=0$;
\item[$\circ$] $\Ext^1_{\der}(\ct, X)=0$.
\end{itemize}
An object $T\in \der$ is a {\it cluster-tilting object} if and only if $\add T$ is a cluster-tilting subcategory.
A cluster-tilting object $T\in \der$ is called {\it basic} provided that  $T=\oplus_{i=1}^nT_i$, where $T_i, i=1, \cdots, n$ are indecomposable and $T_i\not\cong T_j$ whenever $i\neq j$.

Let $\ct$ be a cluster-tilting subcategory of $\der$ and $\mod \ct$ the category of finitely presented right $\ct$-modules.
It has been proved by K\"{o}nig and Zhu in~\cite{KonigZhu08}({\it cf.} also ~\cite{KellerReiten07}) that the functor $\Hom_{\der}(\ct,-): \der \to \mod \ct$ induces an equivalence $\der/\add \Sigma \ct\cong \mod \ct$. Moreover, if $\der$ has Auslander-Reiten triangles, then the Auslander-Reiten sequences of $\mod \ct$ is induced from the Auslander-Reiten triangles of $\der$. In this case, we have $\tau^{-1}\Sigma \ct=\ct$, where $\tau$ is the Auslander-Reiten translation of $\der$.
\subsection{}
Let $A$ be a finite-dimensional hereditary algebra over a field $k$. Let $\mod A$ be the category of finitely generated right $A$-modules and $\der^b(\mod A)$ the bounded derived category  with suspension functor $\Sigma$. Let $\tau$ be the Auslander-Reiten translation of $\der^b(\mod A)$. In the following, we fix a positive integer $m$ and  set $F:=\tau^{-1}\circ \Sigma$. The {\it repetitive cluster category} $\cc_{F^m}(A)$ introduced by Zhu~\cite{Zhu2010} is the  orbit category of $\der^b(\mod A)/<F^m>$, which is by definition  a $k$-linear category whose objects are the same as $\der^b(\mod A)$, and whose morphisms are given by
\[\cc_{F^m}(A)(X,Y):=\bigoplus_{i\in \Z}\Hom_{\der^b(\mod A)}(X, F^{mi}Y), \text{where}\ X, Y\in \der^b(\mod A).
\]
When $m=1$, we get the  cluster category $\cc(A)$. By the main theorem of Keller~\cite{Keller05}, we know that $\cc_{F^m}(A)$ admits a canonical triangle structure such that the canonical projection functor $\pi_m:\der^b(\mod A)\to \cc_{F^m}(A)$ is a triangle functor. Moreover, by the universal property of the orbit category $\cc_{F^m}(A)$, we have a triangle functor $\rho_m: \cc_{F^m}(A)\to \cc(A)$ such that $\pi_A=\rho_m\circ \pi_m$, where $\pi_A$ is the canonical projection functor $\pi_A: \der^b(\mod A)\to \cc(A)$.

It has been shown in ~\cite{Zhu2010} that there exists bijections between the following three sets: the set of cluster-tilting subcategory of $\der^b(\mod A)$, the set of cluster-tilting subcategories of $\cc_{F^m}(A)$ and the set of cluster-tilting subcategories of $\cc(A)$, via the triangle functors: $\pi_m:\der^b(\mod A)\to \cc_{F^m}(A)$ and $\rho_m:\cc_{F^m}(A)\to \cc(A)$. In particular, the repetitive cluster categories have cluster-tilting objects.

Let $\wt{T}$ be a basic cluster-tilting object in the repetitive cluster category $\cc_{F^m}(A)$, the endomorphism algebra $\End_{\cc_{F^m}}(A)(\wt{T})$ is called the {\it repetitive cluster-tilted algebra} of $T$.
We have the following main results of ~\cite{Zhu2010}({\it cf.} Theorem 3.7 and Theorem 3.8 in~\cite{Zhu2010}).
\begin{theorem}~\label{t:repetitive cluster}
Let $T$ be a basic cluster-tilting object in $\cc(A)$ and $A=\End_{\cc(A}(T)$ the cluster-tilted algebra of $T$. Let $\wt{T}$ be the corresponding basic cluster-tilting object in $\cc_{F^m}(A)$ of $T$ via the triangle functor $\rho_m$ and
 $\wt{A}=\End_{\cc_{F^m}(A)}(\wt{T})$  the associated  repetitive cluster-tilted algebra. Then we have the followings:
\begin{enumerate}
\item the restriction of $\pi_m:\der^b(\mod A)\to \cc_{F^m}(A)$ induces a Galois covering $\pi_m:\pi_A^{-1}(\add T)\to \add \wt{T}$ of $\wt{A}$. Moreover, the projection functor $\pi_m:\der^b(\mod A)\to \cc_{F^m}(A)$ induces a push-down functor $\wt{\pi}_m:\der^b(\mod A)/ \pi^{-1}_A(\add \Sigma T)\to \mod \wt{A}$;
    \item the functor $\rho_m:\cc_{F^m}(A)\to \cc(A)$ restricted to the cluster-tilting subcategory $\add \wt{T}=\rho^{-1}_m(\add T)$ induces a Galois covering of $A$. Moreover, the functor $\rho_m$ also induces a push-down functor $\wt{\rho}_m: \mod \wt{A}\to \mod A$.
\end{enumerate}
\end{theorem}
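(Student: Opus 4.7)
The plan is to verify the defining axioms of a Galois covering (in the Bongartz--Gabriel sense) directly from the orbit-category construction, and then to obtain the two push-down functors by combining these coverings of cluster-tilting subcategories with the K\"onig--Zhu equivalence $\der/\add\Sigma\ct\simeq\mod\ct$ recalled above, applied to suitable cluster-tilting subcategories of $\der^b(\mod A)$, $\cc_{F^m}(A)$ and $\cc(A)$.

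For part (1), I would first use the bijection between cluster-tilting subcategories of these three categories, together with the relation $\pi_A=\rho_m\circ\pi_m$, to identify $\pi_A^{-1}(\add T)$ with the additive closure of $\{F^iT_j\mid i\in\Z,\,1\le j\le n\}$ in $\der^b(\mod A)$, so that $\pi_A^{-1}(\add T)$ is cluster-tilting in $\der^b(\mod A)$ and $\add\wt{T}$ is its image under $\pi_m$. The group $\langle F^m\rangle\cong\Z$ acts freely on the indecomposables of $\pi_A^{-1}(\add T)$, and the very definition of the orbit category,
\[
\cc_{F^m}(A)(X,Y)=\bigoplus_{i\in\Z}\Hom_{\der^b(\mod A)}(X,F^{mi}Y),
\]
is precisely the Galois-covering formula on morphism spaces, so the restriction $\pi_m:\pi_A^{-1}(\add T)\to\add\wt{T}$ is a Galois covering with group $\langle F^m\rangle$. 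Since $\Sigma$ commutes with $F$, we have $\Sigma\pi_A^{-1}(\add T)=\pi_A^{-1}(\add\Sigma T)$; applying K\"onig--Zhu to the cluster-tilting subcategories $\pi_A^{-1}(\add T)\subset\der^b(\mod A)$ and $\add\wt{T}\subset\cc_{F^m}(A)$ yields equivalences
\[
\der^b(\mod A)/\pi_A^{-1}(\add\Sigma T)\simeq\mod\pi_A^{-1}(\add T)\quad\text{and}\quad\cc_{F^m}(A)/\add\Sigma\wt{T}\simeq\mod\wt{A},
\]
so that $\wt{\pi}_m$ is the functor on quotients induced by $\pi_m$, which is the push-down attached to the Galois covering.

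For part (2), the strategy is symmetric with the finite group $G=\langle F\rangle/\langle F^m\rangle\cong\Z/m\Z$. Splitting the $\Z$-indexed sum defining $\cc(A)$-morphisms into cosets modulo $m$ gives the Galois-covering formula
\[
\Hom_{\cc(A)}(X,Y)=\bigoplus_{r=0}^{m-1}\Hom_{\cc_{F^m}(A)}(X,F^rY)
\]
for $\rho_m:\add\wt{T}\to\add T$, and freeness of the $G$-action on the $mn$ indecomposable summands of $\wt{T}$ follows from freeness of the $\langle F\rangle$-action on indecomposables of $\pi_A^{-1}(\add T)$. Applying K\"onig--Zhu to $\add\wt{T}\subset\cc_{F^m}(A)$ and $\add T\subset\cc(A)$, and noting that $\rho_m(\add\Sigma\wt{T})\subseteq\add\Sigma T$, produces the induced push-down $\wt{\rho}_m:\mod\wt{A}\to\mod A$.

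The main technical obstacle I anticipate is ruling out $F$-periodicity on indecomposables, \ie verifying that no nonzero power of $F=\tau^{-1}\Sigma$ fixes an indecomposable of $\der^b(\mod A)$ lying in $\pi_A^{-1}(\add T)$; this ensures that both group actions are free and that the orbit-category morphism formulas can legitimately be read as the Bongartz--Gabriel covering identities. A secondary point is checking that the push-down machinery of \cite{BongartzGabriel82} applies to the locally bounded cluster-tilting subcategories at hand, so that the coverings on $\add$-closures descend cleanly to the stated module-category statements.
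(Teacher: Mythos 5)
The paper itself offers no proof of Theorem~\ref{t:repetitive cluster}: it is quoted verbatim as a known result, namely Theorems 3.7 and 3.8 of Zhu \cite{Zhu2010}, so there is no in-paper argument to measure your sketch against. Judged on its own, your outline is a reasonable reconstruction of how the result is actually proved in \cite{Zhu2010} and of the machinery this paper recalls around it: the orbit-category morphism formula $\cc_{F^m}(A)(X,Y)=\bigoplus_{i\in\Z}\Hom_{\der^b(\mod A)}(X,F^{mi}Y)$ is read as the Bongartz--Gabriel covering identity, the identification $\pi_A^{-1}(\add T)=\add\{F^iT_j\}$ and $\add\wt{T}=\rho_m^{-1}(\add T)$ comes from Zhu's bijection of cluster-tilting subcategories, and the push-down functors are obtained from the K\"onig--Zhu equivalences $\der/\add\Sigma\ct\simeq\mod\ct$ applied to the three ambient categories; this is also exactly how the paper uses the theorem later (see the commutative squares in the proof of Proposition~\ref{p:covering} and of Theorem~\ref{t: main theorem}). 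Two of the points you flag deserve a word. The ``main technical obstacle'' (freeness of the $\langle F\rangle$-action) is in fact standard for hereditary $A$: an indecomposable of $\der^b(\mod A)$ has cohomology concentrated in a single degree, and $F^a=\tau^{-a}\Sigma^a$ with $a\neq 0$ moves that degree (projectives/injectives being handled by $\tau P=\Sigma^{-1}\nu P$), so no indecomposable is $F^a$-periodic; freeness of the induced $\Z/m\Z$-action on the $mn$ summands of $\wt{T}$ then follows as you say, using that $T$ is basic. More delicate, and worth making explicit if you were to write this up, are (i) the fact that $F$ only satisfies $F^m\cong\id$ up to natural isomorphism on $\cc_{F^m}(A)$, so the $\Z/m\Z$-``action'' in part (2) needs the usual strictification caveat, and (ii) the identification of the functor induced on the K\"onig--Zhu quotients with the genuine push-down of \cite{BongartzGabriel82}; the paper itself does not take (ii) for granted but isolates it in the remark following the theorem (exactness of $\wt{\pi}_m$), so you should not treat it as automatic either.
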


\begin{remark}~\label{r:locally bonded and directed}
Set $\ct=\pi_A^{-1}(\add T)$
and let $\ind \ct$ be a set of representatives of the isoclasses of all indecomposable objects in $\ct$.
Set \[\End(\ct):=\bigoplus_{T_i, T_j\in \ind \ct}\Hom_{\der^b(\mod A)}(T_i,T_j),\]
which is an associative algebra without units.
It is not hard to see that $\End(\ct)$ is locally bounded.
  On the other hand, the finitely presented $\ct$-modules coincides with $\End(\ct)$-modules of finite length.
  Hence, we have an equivalence of categories  \[\mod \End(\ct)\cong \der^b(\mod A)/\Sigma \ct\] by ~\cite{KonigZhu08}, which implies that $\End(\ct)$ is directed.
\end{remark}
\begin{remark}
One can verify that the functor $\wt{\pi}_m$ coincides with the restriction of the push-down functor induced by the Galois covering $\pi_m:\pi_A^{-1}(\add T)\to \add \wt{T}$. Hence, $\wt{\pi}_m$ is an exact functor. On the other hand, any exact sequence of $\der^b(\mod A)/\Sigma \ct$ can be lifted to a triangle in $\der^b(\mod A)$ ({\it cf.} Lemma $8$ of ~\cite{Palu08}). Then one can also prove the exactness  directly in this setting.
\end{remark}
Note that a Galois covering functor will not induce a Galois covering for the corresponding categories of modules in general. However, for the  Galois coverings in the above theorem,  we have the following observation.
\begin{proposition}~\label{p:covering}
Keep the notations in Theorem~\ref{t:repetitive cluster}, we have the followings:
\begin{itemize}
\item[(1)]the push-down functor $\wt{\pi}_m:\der^b(\mod A)/ \pi^{-1}_A(\add \Sigma T)\to \mod \wt{A}$ is a Galois covering of $\mod \wt{A}$;
    \item[(2)] the push-down functor $\wt{\rho}_m:\mod \wt{A}\to \mod A$ is a Galois covering of $\mod A$.
\end{itemize}
\end{proposition}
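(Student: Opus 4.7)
The plan is to deduce both claims from Theorem~\ref{t:repetitive cluster} by transporting the Galois coverings $\pi_m$ and $\rho_m$ through the K\"onig--Zhu equivalence $\mod\cs\simeq \der/\add\Sigma\cs$, applied successively to the cluster-tilting subcategories $\cs=\ct$, $\cs=\add\wt T$ and $\cs=\add T$ in the ambient triangulated categories $\der^b(\mod H)$, $\cc_{F^m}(H)$ and $\cc(H)$ (I write $H$ for the hereditary algebra in order to distinguish it from the cluster-tilted algebra $A=\End_{\cc(H)}(T)$). Under these equivalences the push-downs $\wt\pi_m$ and $\wt\rho_m$ become the quotient functors
$$\der^b(\mod H)/\add\Sigma\ct\ \longrightarrow\ \cc_{F^m}(H)/\add\Sigma\wt T\ \longrightarrow\ \cc(H)/\add\Sigma T$$
induced by $\pi_m$ and $\rho_m$, so the proposition reduces to verifying the Bongartz--Gabriel axioms for these induced functors with Galois groups $G=\Z=\langle F^m\rangle$ in case (1) and $G=\Z/m\Z$ (generated by $F$ modulo $F^m$) in case (2).

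The first thing to check is that the quotient descends properly. Since $F^m$ commutes with $\Sigma$ and sends $\ct$ to $\ct$, the subcategory $\add\Sigma\ct$ is $G$-stable, and since $\pi_m$ is a triangle functor restricting to $\ct\to\add\wt T$, one has $\pi_m(\add\Sigma\ct)=\add\Sigma\wt T$; the analogous statement $\rho_m(\add\Sigma\wt T)=\add\Sigma T$ holds in case (2). With this compatibility in place, the Hom-space decomposition for $\pi_m$ on $\ct$, namely
$$\Hom_{\cc_{F^m}(H)}(\pi_mX,\pi_mY)\ =\ \bigoplus_{i\in\Z}\Hom_{\der^b(\mod H)}(X,F^{mi}Y)$$
for $X,Y\in\ct$, restricts coherently to the subspace of morphisms factoring through $\add\Sigma(-)$. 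Passing to quotients yields the required direct-sum decomposition of Hom-spaces for $\wt\pi_m$ on all pairs of objects of $\mod\End(\ct)$; the same argument with finite sums indexed by $\Z/m\Z$ handles $\wt\rho_m$.

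The main obstacle I foresee is the freeness of the $G$-action on isomorphism classes of indecomposable objects of the source module category. Under the K\"onig--Zhu equivalence an indecomposable object of $\mod\End(\ct)$ corresponds to an indecomposable $X\in\der^b(\mod H)$ not lying in $\add\Sigma\ct$. Because $\pi_m$ is itself a Galois covering on $\ct$ with group $\Z$, the translates $\{F^{mi}X\}_{i\in\Z}$ are pairwise non-isomorphic in $\der^b(\mod H)$; and since $\add\Sigma\ct$ is $F^m$-stable and does not contain $X$, no two of them become identified or vanish in the quotient $\der^b(\mod H)/\add\Sigma\ct\simeq\mod\End(\ct)$. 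The parallel argument for the finite group $\Z/m\Z$ acting on $\mod\wt A$ is identical. Combined with the surjectivity on isoclasses of objects supplied by Theorem~\ref{t:repetitive cluster}, these verifications establish both assertions (1) and (2).
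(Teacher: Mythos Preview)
Your argument is correct and follows essentially the same route as the paper: one shows that $F^m$ descends to an autoequivalence of $\der^b(\mod H)/\add\Sigma\ct$, that the orbit-category Hom-decomposition passes to the quotient by $\add\Sigma\ct$, and that $\wt\pi_m$ thereby identifies $\mod\wt A$ with the orbit category under the free $\langle F^m\rangle$-action. Two small wording fixes: the Hom-decomposition you quote is needed (and holds by definition of the orbit category) for \emph{all} $X,Y\in\der^b(\mod H)$, not only for $X,Y\in\ct$; and the freeness of the $\langle F^m\rangle$-action on indecomposables comes from the fact that $F$ has no periodic indecomposables in $\der^b(\mod H)$, rather than from the Galois covering ``on $\ct$''.
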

\begin{proof}
We will only prove the first statement and the second one follows  similarly.

 Let $\ct=\pi_A^{-1}(\add T)$. Since $\Sigma \ct$ is also a cluster-tilting subcategory of $\der^b(\mod A)$, we have $F\Sigma \ct=\Sigma\ct$. One shows that $F^m:\der^b(\mod A)\to \der^b(\mod A)$ induces a $k$-linear equivalence $F^m:\der^b(\mod A)/\Sigma \ct\to \der^b(\mod A)/\Sigma \ct $. Let $G$ be the infinite cyclic group generated by $F^m$ which is acting freely on the objects of $\der^b(\mod A)/\Sigma \ct$.
 Note that by Theorem~\ref{t:repetitive cluster}, we have the following commutative diagram
\[\xymatrix{\der^b(\mod A)\ar[d]_{Q_1}\ar[rr]^{\pi_m}& &\cc_{F^m}(A)\ar[d]_{Q_2}\\
\der^b(\mod A)/\Sigma \ct\ar[rr]^{\wt{\pi}_m}& &\cc_{F^m}(A)/\Sigma \wt{T}=\mod \wt{A}}
\]
where $Q_1$ and $Q_2$ are natural quotient functors.
Then for any indecomposable objects $X,Y$ in $\der^b(\mod A)/\Sigma \ct$, one can show that $\wt{\pi}_m(X)\cong \wt{\pi}_m(Y)$ if and only if there exists $i\in \Z$ such that $Y\cong F^{im}X$ in $\der^b(\mod A)/\Sigma\ct$. On the other hand, for any objects $M,N$ in $\mod \wt{A}$ with preimages $\hat{M}, \hat{N}$ in $\der^b(\mod A)/\Sigma \ct$ respectively. We clearly have
\[\Hom_{\mod \wt{A}}(M,N)=\bigoplus_{i\in \Z}\Hom_{\der^b(\mod A)/\Sigma \ct}(\hat{M}, (F^m)^i\hat{N})=\bigoplus_{i\in \Z}\Hom_{\der^b(\mod A)/\Sigma \ct}((F^m)^i\hat{M}, \hat{N}).
\]
This particular implies that $\mod \wt{A}$ identifies the orbit category of $\der^b(\mod A)/\Sigma \ct$ by $G$.
It is easy to see that the functor $\wt{\pi}_m$ coincides with the quotient functor.
\end{proof}

\section{Poorf of the main theorem}~\label{s: proof main theorem}

To prove our main result, we need the following result of Guo and Peng~\cite{GuoPeng97}, which gives a sufficient condition for the existence of Hall polynomials
\begin{lemma}~\label{l:guopeng-result}
Let $k$ be a finite field.
Let $\Lambda$ be a   finite-dimensional $k$-algebra of representation-finite type  and  there is a locally bounded $k$-algebra $R$ which is directed, such that there exists a covering funtor $F:\mod R\to \mod \Lambda$, and for any $M, N\in \mod \Lambda$, there exist $X,Y\in \mod R$ with $FX=M$ and $FY=N$ such that $F$ induces the $k$-isomorphism
\[\Ext^1_R(X,Y)\cong \Ext^1_{\Lambda}(M,N).
\]
Then $\Lambda$ has Hall polynomials.
\end{lemma}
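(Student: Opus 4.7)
The plan is to transfer the existence of Hall polynomials from $R$, where they are available by restricting to finite-dimensional directed subquotients and applying Ringel's theorem for representation-directed algebras, to $\Lambda$ via the covering functor $F$, using the hypothesized $\Ext^1$-isomorphism as the bridge.

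First I would recall a Riedtmann--Peng style counting formula expressing the Hall number $F^M_{N,L}$ as a product of $|\Ext^1_\Lambda(N,L)_M|$ (the number of extensions with middle term isomorphic to $M$) with certain $\Hom$- and $\Aut$-orders. Under base change to a conservative finite extension $E/k$, the $\Hom$-orders follow from $\Hom_{\Lambda^E}(N^E,L^E)\cong \Hom_\Lambda(N,L)\otimes_k E$, and the automorphism orders $|\Aut_{\Lambda^E}(V^E)|$ are polynomial in $|E|$ precisely because conservativity ensures that each simple factor of $\End_\Lambda(V)/\rad\End_\Lambda(V)$ remains a field after base change to $E$; a standard Wedderburn/Lang-type count then writes each $|\Aut_{\Lambda^E}(V^E)|$ as a polynomial in $|E|$ depending only on numerical invariants of $V$.

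The crucial step is the count $|\Ext^1_\Lambda(N,L)_M|$. Using the hypothesis, pick lifts $X,Y\in\mod R$ with $FX=N$, $FY=L$ and an $F$-induced isomorphism $\Ext^1_R(X,Y)\cong \Ext^1_\Lambda(N,L)$. Because $F$ is exact and a covering functor, it carries the middle term of an $R$-extension to the middle term of its image, so this identification respects the stratification by middle term:
\[|\Ext^1_\Lambda(N,L)_M| \;=\; \sum_{[Z]:\, FZ\cong M} |\Ext^1_R(X,Y)_Z|.\]
Since $R$ is locally bounded and $X,Y$ are finite-dimensional, only finitely many $[Z]$ contribute; the classes $[Z]$ satisfying $FZ\cong M$ form (part of) a single orbit under the covering group, so the sum is effectively finite and is compatible with $-\otimes_k E$.

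Finally, restrict to a finite-dimensional directed quotient $R'$ of $R$ supporting $X$, $Y$ and the finitely many relevant middle terms $Z$. Ringel's theorem supplies Hall polynomials for $R'$, so in particular each $|\Ext^1_{R^E}(X^E,Y^E)_{Z^E}|$ is a polynomial in $|E|$; combined with the polynomial behavior of the $\Hom$- and $\Aut$-factors, summing the finitely many contributions produces a single $\phi^M_{N,L}\in\Z[T]$ satisfying $\phi^M_{N,L}(|E|)=F^{M^E}_{N^E,L^E}$ for every conservative finite $E/k$, proving that $\Lambda$ has Hall polynomials. I expect the main obstacle to be the automorphism-order step: one must carefully exploit conservativity to force $|\Aut(V^E)|$ to be polynomial in $|E|$ and check that this polynomiality interacts correctly with the covering-induced stratification by middle term.
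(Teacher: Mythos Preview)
The paper does not prove this lemma at all: it is quoted verbatim as ``the following result of Guo and Peng~[7]'' and used as a black box (together with the weakening in Remark~3.2) in the proof of the main theorem. So there is no ``paper's own proof'' to compare against; your proposal is really a reconstruction of the argument in Guo--Peng, \emph{Universal PBW-basis of Hall--Ringel algebras and Hall polynomials}, J.\ Algebra \textbf{198} (1997).

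As such a reconstruction, your outline is on the right track: reduce $F^M_{N,L}$ via the Riedtmann--Peng identity to the three ingredients $|\Hom|$, $|\Aut|$, and $|\Ext^1(N,L)_M|$; handle the first two by conservativity (so that $\End(V)/\rad\End(V)$ stays a product of fields after base change and $|\Aut(V^E)|=|E|^{\dim_k\rad\End(V)}\prod_i|\GL_{n_i}(D_i^E)|$ is polynomial in $|E|$); and transport the Ext-count to $R$ through the hypothesized isomorphism, then down to a finite-dimensional directed piece where Ringel's theorem applies. Two points deserve more care than you give them. First, you must check that the \emph{same} lifts $X,Y$ work uniformly after base change, i.e.\ that $F^E:\mod R^E\to\mod\Lambda^E$ is again a covering functor and that the isomorphism $\Ext^1_R(X,Y)\cong\Ext^1_\Lambda(N,L)$ base-changes to an isomorphism over $E$; otherwise the stratification $|\Ext^1_{\Lambda^E}(N^E,L^E)_{M^E}|=\sum_{[Z]}|\Ext^1_{R^E}(X^E,Y^E)_{Z^E}|$ has no reason to hold with the same index set. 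Second, the assertion that the middle terms $Z$ with $FZ\cong M$ lie in ``(part of) a single orbit under the covering group'' is not what you actually need; what matters is only that there are finitely many such $Z$ (which follows since $\supp Z\subseteq\supp X\cup\supp Y$ is finite) and that this finite set is stable under $-\otimes_kE$. With these two points tightened, your sketch matches the standard Guo--Peng strategy.
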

\begin{remark}~\label{r:weaker condition}
According to Theorem $5.1$ of ~\cite{GuoPeng97}, the last condition in Lemma~\ref{l:guopeng-result} can be weakened to the following: for any $M,N\in \mod \Lambda$ with $N$ indecomposable, there exist $X_i,Y_i\in \mod R, i=1, 2$ with $FX_i\cong M$ and $FY_i\cong N$ such that $F$ induces the $k$-isomorphisms
\[\Ext_{R}^1(X_1, Y_1)\cong \Ext_{\Lambda}^1(M,N)~\text{and }~\Ext^1_{R}(Y_2, X_2)\cong \Ext^1_{\Lambda}(N,M).
\]
\end{remark}
 Now we are in a position to  prove the main theorem of this note.

\subsection*{Proof of Theorem~\ref{t: main theorem}}
By the definition of repetitive cluster-tilted algebras, there is a finite-dimensional hereditary algebra $A$ such that $\Lambda$ is the endomorphism algebra of a basic cluster-tilting object $\wt{T}$ in a repetitive cluster category $\cc_{F^m}(A)$ of $A$. Note that we have an equivalence of categories $\Hom_{\cc_{F^m}(A)}(\wt{T},-): \cc_{F^m}(A)/\Sigma \wt{T}\to \mod \Lambda$. Hence, $\Lambda$ is representation-finite implies that $A$ is representation-finite. Let $\ct=\pi_m^{-1}(\add \wt{T})$,
 by remark~\ref{r:locally bonded and directed}, we know that $\End(\ct)$ is locally bounded and directed.
By part $(1)$ of Proposition~\ref{p:covering}, we deduce that \[\wt{\pi}_m:\mod \End(\ct)\to \mod \Lambda\] is a directed Galois covering of $\mod \Lambda$.

According to Lemma~\ref{l:guopeng-result} and Remark~\ref{r:weaker condition}, it suffices to show that for any $M,N\in \mod \Lambda$ with $N$ indecomposable, there exist $X_i,Y_i\in \mod \End(\ct), i=1, 2$ with $FX_i\cong M$ and $FY_i\cong N$ such that $F$ induces the $k$-isomorphisms
\[\Ext_{\End(\ct)}^1(X_1, Y_1)\cong \Ext_{\Lambda}^1(M,N)~\text{and }~\Ext^1_{\End(\ct)}(Y_2, X_2)\cong \Ext^1_{\Lambda}(N,M).
\]
We will only prove the existence for the first isomorphism, where the second one follows similarly.
We may and we will assume that $M$ is also indecomposable  and $\Ext^1_{\Lambda}(M,N)\neq 0$.

Recall that  we have the following commutative diagram
\[\xymatrix{\der^b(\mod A)\ar[d]^{Q_1}\ar[rrr]^{\pi_m}&&& \cc_{F^m}(A)\ar[d]^{Q_2}\\
\mod \End (\ct)\cong\der^b(\mod A)/\Sigma \ct \ar[rrr]^{\wt{\pi}_m}& &&\cc_{F^m}(A)/\Sigma \wt{T}\cong\mod \Lambda}
\]
where $Q_1$ and $Q_2$ are natural quotient functors. Since $\wt{\pi}_m$ is a Galois covering, there exists $\hat{M}, \hat{N}\in \mod \End (\ct)$ such that $\wt{\pi}_m(\hat{M})=M$ and $\wt{\pi}_m(\hat{N})=N$. Moreover, $\{F^{im}\hat{M}|i\in \Z\}$ forms a complete set of preimages of $M$ in $\mod \End(\ct)$. For simplicity, we set $\hat{M}_i=F^{im}\hat{M}$ in the following.
By abuse of notations, we still denote by $\hat{M}_i, \hat{N}\in \der^b(\mod A)$ the unique  indecomposable preimages of $\hat{M}_i$ and $\hat{N}$ respectively.
Without of loss generality, we may assume that $\hat{N}\in \mod A$.
 By the definition of covering functor and the fact that $\wt{\pi}_m$ is an exact functor preserving projectivity, we have the following $k$-isomorphism
\[\bigoplus_{i\in \Z} \Ext^1_{\End(\ct)}(\hat{M}_i, \hat{N})\cong \Ext^1_{\Lambda}(M,N).
\]
Since $0\neq \dim_{k}\Ext^1_{\Lambda}(M,N)<\infty$, the vector space $\Ext^1_{\End(\ct)}(\hat{M}_i, \hat{N})$ vanishes for all but finitely many $i$.
Let $t\in \Z$ such that $\Ext^1_{\End(\ct)}(\hat{M}_t, \hat{N})\neq 0$ and $\Ext^1_{\End(\ct)}(\hat{M}_j, \hat{N})=0$ for $j>t$.

We claim that $\Ext^1_{\End(\ct)}(\hat{M}_t, \hat{N})\cong \Ext^1_{\Lambda}(M,N)$. It suffices to show that $\Ext^1_{\End(\ct)}(\hat{M}_i, \hat{N})=0$ for $i<t$. By the Auslander-Reiten translation formula, we have
\[\Ext^1_{\End(\ct)}(\hat{M}_i,\hat{N})\cong D\overline{\Hom}_{\End(\ct)}(\hat{N}, \tau \hat{M}_i),
\]
where $\tau$ is the Auslander-Reiten translation of $\mod \End(\ct)$ which is induced by the Auslander-Reiten translation of $\der^b(\mod A)$.
On the other hand, we have
\[\overline{\Hom}_{\End(\ct)}(\hat{N}, \tau \hat{M}_i)=\frac{\Hom_{\der^b(\mod A)}(\hat{N}, \tau \hat{M}_i)}{\{f:\hat{N}\to \tau \hat{M}_i~ \text{factoring through } \add \Sigma \ct\
 \text{or} \add \Sigma^2\ct\} }.
\]
Note that $\Ext^1_{\End(\ct)}(\hat{M}_t, \hat{N})\neq 0$ implies that $\Hom_{\der^b(\mod A)}(\hat{N}, \tau \hat{M}_t)\neq 0$. Since $\hat{N}\in \mod A$, we deduce that
$\tau\hat{M}_t\in \mod A$ or $\tau\hat{M}_t\in \Sigma \mod A$. Recall that $\hat{M}_i=F^{im}\hat{M}$,
if $\tau\hat{M}_t\in \mod A$, then we have
\[\Hom_{\der^b(\mod A)}(\hat{N}, \tau \hat{M}_i)=\Hom_{\der^b(\mod A)}(\hat{N}, F^{(i-t)m}\tau \hat{M_t})=0\ \text{for}\ i<t.
\]
Now assume that $\tau\hat{M}_t\in \Sigma \mod A$ and let $\tau\hat{M}_t=\Sigma L$, where $L\in \mod A$. From
 \[0\neq\Hom_{\der^b(\mod A)}(\hat{N}, \tau \hat{M}_t)=\Hom_{\der^b(\mod A)}(\hat{N}, \Sigma L),\]
 we  deduce  that $L$ is a predecessor of $\hat{N}$ in $\der^b(\mod A)$.
In this case, we have
\begin{eqnarray*}
\Hom_{\der^b(\mod A)}(\hat{N}, \tau \hat{M}_i)&=&\Hom_{\der^b(\mod A)}(\hat{N}, F^{(i-t)m}\tau \hat{M}_t)\\
&=&\Hom_{\der^b(\mod A)}(\hat{N}, \tau^{(t-i)m}\Sigma^{(i-t)m+1}L)\\
&=&\begin{cases}0& \text{for}\ i<t-1;\\
\Hom_{\der^b(\mod A)}(\hat{N}, \tau^m\Sigma^{1-m} L)& \text{for}\ i=t-1.
\end{cases}
\end{eqnarray*}
It is clear that $\Hom_{\der^b(\mod A)}(\hat{N}, \tau^m\Sigma^{1-m} L)=0$ if $m\geq 2$. Now suppose that $m=1$ and
 $\Hom_{\der^b(\mod A)}(\hat{N}, \tau L)\neq 0$, then $\hat{N}$ is a predecessor of $\tau L$ and hence a predecessor of $L$, which contradicts to the fact that $\der^b(\mod A)$ is directed. We have proved that $\Hom_{\der^b(\mod A)}(\hat{N}, \tau \hat{M}_i)=0$ for $i\neq t$ and hence $\Ext^1_{\End(\ct)}(\hat{M}_i, \hat{N})=0$ for $i\neq t$, which completes the proof.

\def\cprime{$'$}
\providecommand{\bysame}{\leavevmode\hbox
to3em{\hrulefill}\thinspace}
\providecommand{\MR}{\relax\ifhmode\unskip\space\fi MR }
\providecommand{\MRhref}[2]{%
  \href{http://www.ams.org/mathscinet-getitem?mr=#1}{#2}
} \providecommand{\href}[2]{#2}

\end{document}